\newtheorem{proposition}{Proposition}
\title{On The Diameter of Pancake Graphs}
\author{Harigovind V R \and Pramod P Nair}
\date{}
\begin{document}
\maketitle
\section*{Abstract}
The Pancake graph($P_n$) represents the group of all permutations on n elements, namely $S_n$, with respect to the generating set containing all prefix reversals. The diameter of a graph is the maximum of all distances on the graph, where the distance between two vertices is the shortest path between them. In the case of the $P_n$, it is the maximum of the shortest generating sequence of each permutation in $S_n$. Here we propose a method to realise better upper bounds to the diameter of $P_n$ that has its focus on Graph Theoretical concepts rather than Algebra.\\
\textbf{\small{Keywords:}}\small{Pancake Graph, Diameter, Prefix Reversals, Path Covering Spanning trees.}\\
\textbf{\small{Subject Classification:}}\small{05C12, 05C25, 05C85}
\section*{Introduction}
A Permutation $\pi$ represents a bijection from a set $A$ onto itself. It is written as $\pi = [\pi_{1} \pi_{2} ... \pi_{n}]$ where $\pi_{i}$ is the image of the $i^{th}$ element. If the set $A=\{1, 2, 3, ... ,n\}$ then the group of all such permutations is called the Symmetric group of degree n ($S_n$). The Pancake graph $P_n$  represents the graph for the group $S_n$ with respect to the generating set of all prefix reversals, $S=\{r_{2}, ... r_{n}\} $ where:
\begin{equation}
    [\pi_{1} \pi_{2} ... \pi_{i-1} \pi_{i} ... \pi_{n}] r_{i} = [\pi_{i} \pi_{i-1} ... \pi_{2} \pi_{1} ... \pi_{n}]
\end{equation}
All permutations in $S_n$ can be represented as a product of the elements of the generating set $S$, Hence $P_n$ is a connected graph. Furthermore, the degree of each vertex of $P_n$ is equal to the cardinality of S. The absence of the identity permutation in set $S$ excludes the possibility of loops in $P_n$. Hence $P_n$ is a simple connected graph which is $(n-1)$ regular. Hence in total we see that $P_n$ is a simple connected regular graph. A graph $G$ is said to be vertex transitive if for each pair of vertices in $G$ there exists some automorphism taking one vertex to the other. In case of $P_n$ we attain this idea through the automorphism group of $S_n$.\par 
\noindent Analyzing the structure of $P_n$ shows us that, for each $n$ the graph contains $n$ copies of $P_{n-1}$. This is the hierarchical property of the structure of $P_n$. The property plays an important role in proving the Hamiltonicity of $P_n$ for $n > 3$. The graph $P_{3}$ is isomorphic to the graph $C_6$, which is Hamiltonian. Now using induction on $n$ we can easily show that $P_{n}$ is Hamiltonian\cite{Sheu1999,Zaks1984}. It has also been shown that all cycles of lengths ranging between 6 and $n!$ can be embedded on the graph $P_n$\cite{Sheu1999, Kanevsky1995}. The above results play a major role in the method that we propose.\par
\noindent When the diameter of $P_n$ is considered the greatest hindrance in determining a precise value is the complex cyclic structure that these graphs possess. Hence we construct a method that eliminates these cycles and correspondingly calculates the diameter. The diameter calculation for Cayley graphs is an NP-hard problem\cite{Even1981}. Therefore we try to conceive a better bound for the value of the diameter. The absence of cycles is obtained in trees, and the best set of trees connected to a graph is the spanning trees. So we will analyse the spanning trees of $P_n$ and hence conceive a bound for its diameter. But as such, considering every possible spanning tree of the graph is again a headache; hence we need a smaller collection of spanning trees to work on. The following ideas were obtained from this train of thought. The next section contains an overview of existing bounds to the solution. In the following section, we discuss the method proposed and observations made. 
\section*{Literary Review}
The first set of bounds to the solution of the Diameter problem were proposed by William H. Gates and Christos H. Papadimitriou in 1979\cite{Gates1979}. Here they defined two new terminologies to determine their bounds. They were as follows:\par
\noindent Given a permutation $\pi$ in $\S_n$, if $\lvert \pi(j) - \pi(j+1)\rvert = 1$ we call $(j,j+1)$ to be an \textit{adjacency}, where $1 \leq j \leq n$. An adjacency is also seen if ${\pi(j),\pi(j+1)} = {1,n}$. A consecutive string of adjacencies in a permutation is called a \textit{block}.\par
\noindent If for some $1 \leq j \leq n$, neither $(j,j+1)$ nor $(j-1,j)$ are adjacencies then we call $\pi(j)$ to be \textit{free}.\par
\noindent So as to calculate the upperbound for the solution Gates and Papadimitriou created an algorithm. The algorithm, for each permutation, exploits the structural properties given in the above definitions to determine the number of reversals to reach the identity permutation, $I_n$. They calculated the maximum number of prefix reversals required to reach identity given any permutation $\pi$ in $S_n$ and came up with the inequality:
\begin{equation}
    f(n) \leq {5n+5}/3
\end{equation}
where f(n) is the diameter of $P_n$ for a given $n$.
To calculate the lower bound they considered a string of length 8 and showed that the lower bound $f(n)$ to be:
\begin{equation}
    {17n}/16 \leq f(n)
\end{equation}
whenever 16 divides $n$.\par
\noindent But they had also quoted that a better set of bounds could be calculated if the length of string was seven. Working upon this, a better set of bounds was calculated by Hal Sudborough and Heydari in 1997\cite{Heydari1997}:
\begin{equation}
    {15n}/14 \leq f(n)
\end{equation}
whenever 14 divides $n$.\par
\noindent An even better set of bounds were proposed by a team from University of Texas, Dallas and Hal Sudborough in 2009\cite{Chitturi2009}:
\begin{equation}
    f(n) \leq {18n}/11
\end{equation}
The precise values for the diameter of Pancake graphs have been calculated up to $n=19$. 
Heydari and Sudborough also computed the diameter of the Pancake graph $P_n$ up to $n=13$. The diameter for $n=14$,$15$ were calculated in 2005 by Yuusuke Kounoike, Keiichi Kaneka, and Yuji Shinano\cite{Kounoike2005}. In one year, in 2006, the same authors and Shogo Asai determined the diameter for $n=16$,$17$\cite{Asai2006}. In 2011 the diameter for $n=18$,$19$ was found by Cibulka\cite{Cibulka2011}.
The table of the values for the values of $d=diam(P_n)$ for $2 \leq n \leq 19$ is given below:
\begin{table}[H]
\centering
\begin{tabular}{|c|c|c|c|c|c|c|c|c|c|c|c|c|c|c|c|c|c|c|}
\hline
n & 2 & 3 & 4 & 5 & 6 & 7 & 8 & 9 & 10 & 11 & 12 & 13 & 14 & 15 & 16 & 17 & 18 & 19\\
\hline
d & 1 & 3 & 4 & 5 & 7 & 8 & 9 & 10 & 11 & 13 & 14 & 15 & 16 & 17 & 18 & 19 & 20 & 22\\    
\hline
\end{tabular}
\caption{The Table of Diameters}
\label{table 2}
\end{table}
\section*{Observations}
In this section, we will look into the method that we propose. The computational complexity of the method has not been discussed, but the theoretical validity has been shown below the proposed method. As mentioned in the final paragraph of the introduction, we look into the following definition before going into the method.\par
\noindent  A \textit{path covering collection of trees} refers to the collection of trees that contain all paths between a given set of vertices.
Now that we have a collection to work upon, how do we construct such a collection? The following result gives the answer:
\begin{proposition}
 Given a Hamiltonian graph G, A path covering collection of trees is obtained by the action of the automorphism group on a Hamiltonian path in G.
\end{proposition}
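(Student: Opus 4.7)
The plan is to verify that the family $\mathcal{T} = \{\sigma(H) : \sigma \in \mathrm{Aut}(G)\}$, where $H$ is any fixed Hamiltonian path of $G$, is both a collection of trees and covers every path between pairs of vertices of $G$. I would split the argument into three stages of increasing difficulty.

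First, I would observe that a Hamiltonian path, being a connected acyclic subgraph that spans $V(G)$, is already a tree, in fact a spanning tree of the simplest possible shape. Second, for any automorphism $\sigma$ of $G$ and any $H = v_1 v_2 \cdots v_N$, the image $\sigma(H) = \sigma(v_1)\sigma(v_2)\cdots\sigma(v_N)$ consists of $N$ distinct vertices (since $\sigma$ is a bijection of $V(G)$) in which consecutive pairs remain adjacent (since $\sigma$ preserves the edge relation); hence each $\sigma(H)$ is again a Hamiltonian path and therefore a tree. This establishes that $\mathcal{T}$ is a collection of trees.

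Third, and most critically, I would argue that every path $P$ of $G$ between a pair of vertices lies inside some member of $\mathcal{T}$. Given such a $P$, the plan is to first extend it to a full Hamiltonian path $H'$ of $G$, and then invoke transitivity of $\mathrm{Aut}(G)$ on Hamiltonian paths to produce $\sigma$ with $\sigma(H) = H'$, whence $P \subseteq H' = \sigma(H) \in \mathcal{T}$. In the setting of $P_n$, the extension step would draw on the hierarchical decomposition into $n$ copies of $P_{n-1}$ and on the embedding of cycles of all lengths $6 \leq \ell \leq n!$ cited in the introduction, while the transitivity on Hamiltonian paths would be supplied by the left-multiplication action of $S_n$ together with the symmetric role played by the prefix reversals.

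The main obstacle is precisely this third stage: neither the extension of an arbitrary path to a Hamiltonian path nor the transitivity of $\mathrm{Aut}(G)$ on the set of Hamiltonian paths is automatic for a general Hamiltonian graph, and both claims must be deduced from structural symmetries of $G$. For $P_n$ the natural approach is an induction on $n$ that splits the given path across the $n$ copies of $P_{n-1}$ and stitches the pieces together using the Hamiltonian cycle structure inside each copy, producing the witnessing automorphism in the process. Making this stitching rigorous is where the bulk of the work lies; by comparison, the tree-preservation step is a formality.
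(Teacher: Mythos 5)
Your first two stages are fine and coincide with the first half of the paper's argument (the paper argues that an automorphism, being a bijection, cannot create a repeated vertex in the image of $H$, hence no cycle); your version, observing that $\sigma(H)$ is again a Hamiltonian path, is if anything cleaner. The genuine gap is your third stage, which is where the entire content of the proposition lives and which you leave as a plan resting on two unproved lemmas: (a) every path of $G$ extends to a Hamiltonian path, and (b) $\mathrm{Aut}(G)$ acts transitively on the set of Hamiltonian paths. Neither is available for a general Hamiltonian graph $G$, which is what the statement quantifies over. Concretely, if $G$ is Hamiltonian but asymmetric (trivial automorphism group) and has at least one edge outside the chosen $H$, then the family $\{\sigma(H)\}$ is just $\{H\}$ and that extra edge is already a path covered by no member of the family; and path-to-Hamiltonian-path extension likewise fails in general. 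So as written the proposal proves only that the family consists of spanning trees, not that it is path covering; the ``main obstacle'' you flag is precisely the statement that had to be proved.

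The $P_n$-specific rescue you sketch does not close this gap either. The automorphisms supplied by the group action you invoke, the left translations $\pi \mapsto u\pi$, carry a Hamiltonian path to a Hamiltonian path traversing the \emph{identical} sequence of prefix reversals, so their orbit misses every Hamiltonian path with a different generator sequence; transitivity on Hamiltonian paths is therefore not delivered by left multiplication, and no argument is given that the remaining automorphisms of $P_n$ (which are known to be very limited for larger $n$) supply it. The cycle-embedding results cited in the introduction give existence of cycles of all lengths, not extendability of an arbitrary path to a Hamiltonian one, and the ``stitching across copies of $P_{n-1}$'' step is not carried out. For comparison, the paper's own covering argument takes a different route: it tries to map the given path onto a segment of $H$ by an inductively composed sequence of automorphisms chosen vertex by vertex. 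That route also silently assumes a strong homogeneity of $\mathrm{Aut}(G)$ that is not justified for an arbitrary Hamiltonian graph, so your instinct about where the difficulty sits is correct; but your proposal replaces that assumption with two lemmas that are false in general and unestablished for $P_n$, rather than proving the covering property.
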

\begin{proof}
The proof of the above result is in two parts:
\begin{enumerate}
    \item To show that the action results in a tree of G.
    \item To show that the collection so obtained is path covering.
\end{enumerate}
Let $H$ be the required Hamiltonian path, $L(G)$ be the automorphism group for $G$.
	\begin{enumerate}
		\item Let the image of $H$ under some automorphism $f$ be not a tree. Then there exists some vertex $v_k$ such that the graph has a cycle with respect to $v_k$. This means there exists vertices $h_k$ and $h_s$ in $H$ such that:
            \begin{equation}
                f(h_k)=f(h_s)=v_k
            \end{equation}
	    But $f$ is an automorphism on $G$ hence a bijection. Thus the above statement contradicts the property of $f$ hence our assumption is wrong. Therefore the collection so formed has only trees in it. The fact that they are spanning is also seen by the fact that the elements of $L(G)$ are bijections.
	    \item Consider a path $p$ of length $k$ in $G$, $(v_{1} v_{2}...v_{k+1})$. Consider any $k$ length segment of $H$, 	   	$(h_{1}h_{2}...h_{k+1})$. Choose $f_1$ such that $f_1(v_1)= h_1$. Now choose $f_2$ such that $f_2(f_1(v_1))=h_1$ and 	$f_2(v_2)=h_2$. Inductively continue until the $(k+1)$th vertex. Since the operation between $f_{i}$’s is composition, all the corresponding functions are in $L(G)$. Therefore we see that the collection obtained is path covering.
\end{enumerate}
\end{proof}	
\noindent Hence we have constructed the required collection $\mathbb{S}$. The next question in front of us is the comparison of distances. The distance between two vertices in the spanning tree may not be equal to the distance between two vertices in the graph $G$. The problem is taken care of by the following properties:
\begin{proposition}
 $\min\limits_{S_i \in \mathbb{S}}d_{S_i}(u,v) = d_{G}(u,v)$ where $\mathbb{S}$ is a path covering collection of spanning trees for $G$.
\end{proposition}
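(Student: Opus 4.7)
My plan is to prove the equality by establishing the two inequalities $\min_{S_i\in\mathbb{S}} d_{S_i}(u,v) \ge d_G(u,v)$ and $\min_{S_i\in\mathbb{S}} d_{S_i}(u,v) \le d_G(u,v)$ separately; the former is essentially immediate, while the latter is exactly where the path covering hypothesis is used.

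For the first inequality, I would observe that each $S_i \in \mathbb{S}$ is a spanning subgraph of $G$ (by Proposition~1 each $S_i$ is a spanning tree), so every $u$-$v$ walk in $S_i$ is also a $u$-$v$ walk in $G$. In particular, a geodesic in $S_i$ realizing $d_{S_i}(u,v)$ is a walk in $G$ from $u$ to $v$, whose length is therefore at least $d_G(u,v)$. Since this holds for every $i$, taking the minimum over $i$ preserves the inequality, giving $\min_i d_{S_i}(u,v) \ge d_G(u,v)$.

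For the second inequality, I would fix a geodesic $P$ in $G$ from $u$ to $v$, so $P$ has length exactly $d_G(u,v)$. By the path covering property of $\mathbb{S}$ (as defined just before Proposition~1 and guaranteed by its proof), there exists some $S_j \in \mathbb{S}$ that contains $P$ as a subgraph. Because $P$ is a $u$-$v$ path inside the tree $S_j$, and the unique $u$-$v$ path in a tree is its geodesic, we conclude $d_{S_j}(u,v) \le \mathrm{length}(P) = d_G(u,v)$, whence $\min_i d_{S_i}(u,v) \le d_G(u,v)$.

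The only delicate point, and the step I expect to justify most carefully, is the invocation of the path covering property: the definition given in the paper says that the collection contains all paths between a given set of vertices, and Proposition~1 was proved by iteratively picking automorphisms to map an arbitrary path onto a segment of the Hamiltonian path. I would make explicit that this in particular covers every geodesic between every pair $u,v$, so that the $S_j$ required above is guaranteed to exist. Combining the two inequalities then yields the claimed equality.
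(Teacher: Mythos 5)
Your proposal is correct and matches the paper's argument in substance: the paper proves the same two facts by contradiction (a tree distance below $d_G(u,v)$ would contradict each $S_i$ being a spanning subgraph, and a minimum above $d_G(u,v)$ would contradict the path covering property), which are exactly your two inequalities stated directly. Your explicit remark that the covering property must apply to geodesics between arbitrary $u,v$ is a careful touch, but it does not change the route.
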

\begin{proof}
Let us assume that the above result is not true then either
\begin{enumerate}
\item $\min\limits_{S \in \mathbb{S}}d_{S}(u,v) < d_{G}(u,v)$
\item $\min\limits_{S \in \mathbb{S}}d_{S}(u,v) > d_{G}(u,v)$
\end{enumerate}
	In the first case, we see that the result shows that we have a path shorter than the shortest path in $G$ in some tree $S$, which is impossible as $S$ is a spanning tree of $G$. 
	In the second case, we see that the result shows that some path $p$ in $G$ is not present in any tree $S \in \mathbb{S}$ which is also impossible as the collection $\mathbb{S}$ is path covering as proven above.
	Therefore we infer that $\min\limits_{S_i \in \mathbb{S}}d_{S_i}(u,v) = d_{G}(u,v)$.
\end{proof}
\noindent Hence we have a way to compare distance in the tree and distance in the graph. The first result can be applied to any graph while the second can be applied to any simple graph. The next set of results are for $P_n$.
\begin{proposition}
There exists a permutation $u$ such that $d_{P_n}(u,In)= diam(Pn)$, where $I_n$ corresponds to the identity element of $S_n$.
\end{proposition}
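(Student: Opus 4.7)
The plan is to exploit the vertex-transitivity of $P_n$, which was established in the introduction via the automorphism group of $S_n$. The statement is essentially the classical observation that in a vertex-transitive graph the diameter is always realized by the distance from any single fixed vertex; I would specialize this to $P_n$ with the fixed vertex being the identity $I_n$.

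First I would begin with the definition of the diameter as the maximum over all pairs: choose permutations $x,y \in S_n$ for which $d_{P_n}(x,y) = \mathrm{diam}(P_n)$; such a pair exists because $S_n$ is finite. Next I would invoke vertex-transitivity to produce an automorphism $\varphi$ of $P_n$ with $\varphi(x) = I_n$. Setting $u = \varphi(y)$, the fact that graph automorphisms preserve distances gives
\begin{equation}
d_{P_n}(u, I_n) = d_{P_n}(\varphi(y),\varphi(x)) = d_{P_n}(x,y) = \mathrm{diam}(P_n).
\end{equation}
Finally I would observe the easy reverse inequality $d_{P_n}(u, I_n) \le \mathrm{diam}(P_n)$ from the very definition of diameter, concluding the equality.

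I do not anticipate a serious obstacle here, since vertex-transitivity has already been invoked in the preamble. The only subtlety worth flagging is making explicit the fact being used, namely that any graph automorphism is an isometry with respect to the graph metric; this follows immediately because an automorphism maps walks to walks of the same length bijectively. If desired, one could give the Cayley-graph-flavored alternative proof observing that $d_{P_n}(x,y) = d_{P_n}(I_n, x^{-1} y)$ under the left-multiplication action, which directly identifies the desired $u$ as $x^{-1} y$. Either route keeps the argument short and self-contained within what has already been developed in the paper.
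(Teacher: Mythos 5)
Your proposal is correct and matches the paper's argument in substance: the paper takes a diametral pair $(u,v)$ and uses translation invariance of the graph metric (left multiplication, i.e.\ the vertex-transitivity of the Cayley graph) to conclude $d(I_n,u^{-1}v)=\mathrm{diam}(P_n)$, which is exactly the Cayley-graph alternative you flag at the end. Your main route via an explicit automorphism $\varphi$ with $\varphi(x)=I_n$ is just a slightly more abstract phrasing of the same idea, with the isometry property made explicit.
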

\begin{proof}
By the definition of diameter we see that for any $n$ there exists vertices $u,v \in P_n$ such that $d_{P_n}(u,v)= diam(P_n)$. By the definition of distance on a graph it is easily seen that its a 	metric and hence is translation invariant. Therefore
\begin{equation}
u^{-1}(d(u,v)) = diam(P_n) \implies 
d(I_n,u^{-1}v) = diam(P_n)    
\end{equation}
That is $u^{-1}v$ is the required permutation.
\end{proof}
\noindent Hence we have seen that we do not have to consider all distances on the graph but rather only distances from $I_n$ to every other vertex of the graph.Finally we have the last result which gives us a lower bound on the value of $diam(P_n)$.
\begin{proposition}
$diam(P_{n-1}) \leq diam(P_{n}).$
\end{proposition}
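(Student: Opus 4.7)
The plan is to exploit the hierarchical property, mentioned in the introduction, that $P_n$ contains an isomorphic copy of $P_{n-1}$: namely the induced subgraph $H$ on those permutations of $S_n$ whose last entry equals $n$. The edges of $H$ correspond exactly to the generators $r_2,\ldots,r_{n-1}$, since $r_n$ is the only prefix reversal that displaces whatever sits in position $n$. Applying Proposition 3 to $P_{n-1}$, I would pick $\pi'\in S_{n-1}$ with $d_{P_{n-1}}(\pi',I_{n-1})=diam(P_{n-1})$ and lift it to $\pi=[\pi'_1\,\pi'_2\cdots\pi'_{n-1}\,n]$, a vertex of $H\subset P_n$. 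Since $diam(P_n)\ge d_{P_n}(\pi,I_n)$, it then suffices to show $d_{P_n}(\pi,I_n)\ge d_{P_{n-1}}(\pi',I_{n-1})$; the point is that using the extra generator $r_n$ cannot really help, because both $\pi$ and $I_n$ have $n$ anchored at the last position.

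The core step is a projection lemma: from any walk $\pi=\sigma^{(0)},\sigma^{(1)},\ldots,\sigma^{(k)}=I_n$ in $P_n$, the sequence obtained by deleting the entry $n$ from each $\sigma^{(t)}$ is a walk of length at most $k$ in $P_{n-1}$ joining $\pi'$ to $I_{n-1}$. To prove it, let $q_t$ be the position of $n$ in $\sigma^{(t)}$ and analyse a generic step $\sigma^{(t)}\xrightarrow{r_i}\sigma^{(t+1)}$ by the three cases $i<q_t$, $i=q_t$, and $i>q_t$. A direct check shows that after $n$ is removed, the move $r_i$ projects to either $r_i$ (when $i<q_t$), $r_{i-1}$ (when $i>q_t$, or when $i=q_t\ge 3$), or a trivial no-op (only in the boundary case $i=2$ with $q_t\in\{1,2\}$). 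In every case the projected step is either a valid edge of $P_{n-1}$ or leaves the projection unchanged, and in particular never accounts for more than one edge of $P_{n-1}$. Because $q_0=q_k=n$, the projected sequence begins at $\pi'$ and ends at $I_{n-1}$ with length at most $k$, giving $d_{P_{n-1}}(\pi',I_{n-1})\le d_{P_n}(\pi,I_n)$.

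The main obstacle will be the explicit bookkeeping inside the projection lemma: one has to track how the reversal $r_i$ reorganises the first $i$ entries relative to the position $q_t$, verify the claimed correspondence of generators in each subcase (including boundary situations such as $q_t=1$ or $q_t=n$), and confirm that the no-op outcome occurs only in the small enumerated case, so that each step of the original walk contributes at most one step to the projected walk. Once this projection lemma is in hand, chaining $diam(P_n)\ge d_{P_n}(\pi,I_n)\ge d_{P_{n-1}}(\pi',I_{n-1})=diam(P_{n-1})$ yields the proposition.
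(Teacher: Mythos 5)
Your argument is correct, and it is in fact more careful than the paper's own proof, which is a short contradiction: assuming $diam(P_{n-1}) > diam(P_n)$, the paper invokes the hierarchical structure and Proposition 3 to produce a vertex $u$ in a copy of $P_{n-1}$ inside $P_n$ that is ``at distance greater than $diam(P_n)$ from $I_n$,'' contradicting the definition of diameter. That step tacitly assumes the $P_n$-distance between two vertices of the copy is at least their distance within the copy --- i.e., that the extra generator $r_n$ cannot create shortcuts between permutations with $n$ fixed in the last position --- and this is precisely what your projection lemma establishes. Your route makes the missing inequality explicit: deleting the symbol $n$ from each vertex of a walk from $\pi$ to $I_n$ in $P_n$ turns each step $r_i$ into $r_i$, $r_{i-1}$, or a no-op (your case analysis on $i$ versus $q_t$ checks out, including the boundary no-op cases $i=2$ with $q_t\in\{1,2\}$, and the resulting generators all lie in $\{r_2,\dots,r_{n-1}\}$), so the projected walk joins $\pi'$ to $I_{n-1}$ in $P_{n-1}$ with length at most $k$, yielding $d_{P_{n-1}}(\pi',I_{n-1})\le d_{P_n}(\pi,I_n)\le diam(P_n)$, and hence the proposition after choosing $\pi'$ via Proposition 3. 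What the paper's version buys is brevity; what yours buys is rigor, since for an induced subgraph the inequality between internal and ambient distances is not automatic and genuinely requires an argument such as your projection (or some other proof that the copy of $P_{n-1}$ is isometrically embedded).
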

\begin{proof}
Let us assume the above result is not true then as per hierarchy and the result stated 	before  $\exists u \in P_{n-1}(i)$ in $P_n$ such that $u$ is at a 	distance greater than $diam(P_n)$ from $I_n$. This is contradictory to the definition of diameter.
\end{proof}
\noindent Using the above results we fabricate a method to compute a bound for the diameter of $P_n$, $n>5$.
\subsection*{Proposed Method}
The following are the steps to the proposed new method:
\begin{enumerate}
\item Construct a set $M= \{u \in S_n | d_{P_n}(I_n,u) \geq diam(P_{n-1})\}$(The construction of $M$ is explained in the next section).
\item Consider the longest connected segment among the elements of $M$ with respect to the graph $P_n$. Let the segment be $T$.
\item Construct a family of trees with vertices in $M$ using the Dijkstra's algorithm with respect to the graph $P_n$.
\item Calculate using \textit{\textbf{Proposition 2}} the distance between the vertices in $M$ in $P_n$. 
\item Determine the maximum of all distances calculated using the above step. Let that value be $d_n$
\item $diam(P_n) \leq diam(P_{n-1})+d_n$.
\end{enumerate}
\subsection*{Theoretical Validity of the Method}
From the method stated above, we see that the method's efficiency lies in the precision we can attain during the construction of $M$. Hence we consider the construction of $M$ first.
\subsubsection*{Construction of $M$}
The following are the steps to the construction of set $M$:
\begin{enumerate}
\item Divide the value $diam(P_{n-1})$ by 5 and determine the quotient $q$ and remainder $r$.
\item Construct the set $F$ of all five sequences of elements in the generating set of $P_n$ such that no two consecutive elements are the same.
\item Now consider every path of the form of some element in $F$ starting from $I_n$ in $P_n$. Let the set $B_1$ contain all intermediate vertices in the paths from $I_n$ and let $C_1$ be the collection of end vertices in each case.
\item Let set $S^{1} = S_n \setminus B_{1} \cup C_{1}$.
\item Now, with respect to the first vertex in $C_{1}$ repeat step 3. That is, in place of $I_n$, consider the first vertex in $C_{1}$. Construct sets $B_{21}$ and $C_{21}$ accordingly and create set $ S^{21}= S_n\setminus B_{21} \cup C_{21}$.
\item Repeat the step until all vertices in $C_1$ are covered.
\item Define $S^{2}$ to be the intersection of all $S^{2i^{\prime}s}$ and $S^1$.
\item So on defines until Sq.
\item Now consider the vertices in the set $C_{qi^{\prime}s}$.
\item Construct the set of all $r$ length sequences of elements of the generating set. Let it be $F_r$.
\item Now repeat step 3 with the vertices of $C_{qi^{\prime}s}$ and with elements from $F_r$.
\item The vertices that are left after repeating the above steps on $C_{qi^{\prime}s}$ form the set $M$.
\end{enumerate}
\noindent Now that we know the set of $M$, let us consider the next part of the method. The basic question that arises is what happens when the rest of the vertices are totally disconnected. Then we claim that the $diam(P_n)=diam(P_{n-1})$. If not, then we see that $diam(P_n) < diam(P_{n-1})$ which has already been proven wrong. Therefore we are left with the possibility that $diam(P_n)=diam(P_{n-1})$. If the graph left is not totally disconnected, then we consider the longest connected path in the resulting graph to come up with a better bound. From the results in the last section, we see that steps 3 and 4 are valid. All that is left is to consider the final statement. Now we have shown that there exists a path of length $diam(P_{n-1})+d_n$ in the graph.  Hence by the fact that the defined distance on the graph is a metric, we see that:
\begin{equation}
diam(P_n) \leq diam(P_{n-1})+d_n
\end{equation}
\section*{Conclusion}
To conclude, we see that the method provides us with a bound to the diameter of $P_n$. Calculating the diameter depends on the difference between the lower and upper bounds determined. If the number of integer values between these bounds is comparably small, we can calculate the diameter by considering these integral values.
\medskip
\bibliographystyle{plain}
\bibliography{Harigovind}
\end{document}